\newcommand{\Z}{\mathbb{Z}}
\newcommand{\Q}{\mathbb{Q}}
\newtheorem{theo}{Theorem}
\theoremstyle{definition}
\theoremstyle{remark}
\newtheorem*{rem}{Remark}
\begin{document}
\title{Elliptic curves and biquadrates}

\author[J. Aguirre]{Juli\'an Aguirre}
\address{Departamento de Matem\'aticas\\
Universidad del Pa\'is Vasco, UPV/EHU\\
Aptdo. 644, 48080 Bilbao, Spain}
\email[J. Aguirre]{julian.aguirre@ehu.es}

\author[J.C. Peral]{Juan Carlos Peral}
\address{Departamento de Matem\'aticas\\
Universidad del Pa\'is Vasco, UPV/EHU\\
Aptdo. 644, 48080 Bilbao, Spain}
\email[J.C. Peral]{juancarlos.peral@ehu.es}

\thanks{J. Aguirre supported by grant IT-305-07 of the Basque Government. J.C. Peral  supported by the UPV/EHU grant EHU 10/05.}

\subjclass[2010]{14H52}

\begin{abstract}
Given two integers $m$ and $n$ consider $N= m^4+ n^4$ and  the elliptic curve 
\[
y^2= x^3-N\,x
\]
The rank  of this family over $\Q(m,n)$ is at least $2$.

Euler constructed a parametric family of  integers $N$ expressible in two different ways  as a sum of two biquadrates. We prove that for those $N$  the corresponding family of elliptic curves has rank at least $4$ over $\Q(u)$. This is an improvement on previous results of Izadi, Khoshnam and Nabardi.
\end{abstract}

\maketitle

\section{Sums of two biquadrates and  elliptic curves}

\subsection{Elliptic curves  and sums of biquadrates. General case}\label{general}
For integers $m$ and $n$ consider   the elliptic curves given by 
\begin{equation}\label{equ}
y^2=x^3- (m^4+n^4)x .
\end{equation}

The torsion groups for elliptic curves  of the form $y^2=x^3+ D\,x$ are, $\Z/ 4 \Z$ for $D=4$, $\Z/ 2\Z\times \Z/ 2\Z  $ for $D=-h^2$ and $\Z/2\Z$ in the rest of the cases (see \cite{S}, Proposition $6.1$, p.~311).  This together with the  fact that $h^2= m^4+n^4$ has no solution (see \cite{HW}, Theorem~266)  implies that the torsion group for  the  curves \eqref{equ} is always $\Z/2\Z$. We have proved the following

\begin{theo}
The family \eqref{equ} has rank at least $2$ over $\Q(m,n)$.
\end{theo}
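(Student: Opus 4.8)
The plan is to exhibit two explicit points on \eqref{equ} defined over $\Q(m,n)$ and to prove that they are independent in the Mordell--Weil group. To find candidate points I look for values of $x$ that make the right-hand side a perfect square. Substituting $x=-m^2$ gives
\[
x^3-(m^4+n^4)\,x=-m^6+(m^4+n^4)\,m^2=m^2n^4=(mn^2)^2,
\]
so $P_1=(-m^2,mn^2)$ lies on the curve; by the symmetry $m\leftrightarrow n$ the point $P_2=(-n^2,m^2n)$ lies on it as well. These two points are the heart of the argument.

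Next I would check that $P_1$ and $P_2$ have infinite order. By the discussion preceding the theorem the torsion subgroup of \eqref{equ} is $\Z/2\Z$, generated by the $2$-torsion point $(0,0)$. Since $P_1$ and $P_2$ have nonzero $x$- and $y$-coordinates in $\Q(m,n)$, neither coincides with $(0,0)$, so both are non-torsion.

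The main step, and the step I expect to be the real obstacle, is to prove that $P_1$ and $P_2$ are $\Z$-linearly independent, i.e.\ that together they generate a subgroup of rank $2$. Here I would invoke the specialization theorem of Silverman (as in \cite{S}): the specialization homomorphism on Mordell--Weil groups is injective for all but finitely many integer specializations $(m,n)=(m_0,n_0)$. It therefore suffices to produce a single specialization for which the two specialized points are independent over $\Q$. Taking $(m_0,n_0)=(1,2)$ yields the curve $y^2=x^3-17\,x$ together with the points $(-1,4)$ and $(-4,2)$, and one verifies these lie on it directly. Their independence over $\Q$ can be established by computing the regulator, namely the determinant of the N\'eron--Tate height pairing matrix, and checking that it is nonzero, or equivalently by a $2$-descent showing that the subgroup they generate has rank $2$.

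Once a single good specialization is in hand, injectivity of the specialization map forces $P_1$ and $P_2$ to be independent over $\Q(m,n)$, so the rank of \eqref{equ} over $\Q(m,n)$ is at least $2$, completing the proof. The delicate point is entirely the independence over the function field: finding the points and ruling out torsion is routine, but linear independence cannot be read off from the formulas directly and is most cleanly reduced, via specialization, to a finite computation over $\Q$.
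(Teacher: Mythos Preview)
Your argument is correct, and in fact a bit simpler than the paper's. The paper also takes $(-n^2,m^2n)$ as one of the points, but for the second it works on the associated curve $y^2=x^3+4(m^4+n^4)x$, finds the point $\bigl(2(m+n)^2,4(m+n)(m^2+mn+n^2)\bigr)$ in the homogeneous space for $d=2$, and then transfers it back to the original curve, obtaining
\[
P_2(m,n)=\Bigl(\frac{(m^2+mn+n^2)^2}{(m+n)^2},\;\frac{mn(m^2+mn+n^2)(2m^2+3mn+2n^2)}{(m+n)^3}\Bigr).
\]
You instead use the symmetric partner $(-m^2,mn^2)$. Both specialize at $(m,n)=(1,2)$ (equivalently $(2,1)$) to the rank~$2$ curve $y^2=x^3-17x$; your pair $(-1,4),(-4,2)$ and the paper's pair $(-1,4),(49/9,224/27)$ in fact generate the same subgroup, since $(-1,4)+(-4,2)=(49/9,-224/27)$, so the independence verifications are equivalent. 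What the paper's choice buys is that its two points lie in \emph{different} homogeneous spaces (divisors $d=-1$ and $d=2$), so their images under the descent maps are visibly distinct and independence can alternatively be read off from the $2$-descent without any height computation; your two points both sit in the $d=-1$ space, so you genuinely need the numerical check. One minor remark: you do not need the full Silverman specialization theorem (injectivity for almost all fibres); it suffices that specialization is a group homomorphism, so any $\Z$-relation among the generic points would persist in every fibre, and a single fibre with independent images already forces independence over $\Q(m,n)$.
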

 
To estimate the rank we study the existence of  generic points on these curves and their associates. The identity 
\[
(-1)(n)^4+\frac{-(m^4+n^4)}{-1}\,(1)^4=m^4
\]
reveals that for $d=-1$ the homogenous space $(U)^4 d+(-N/d)V^4=H^2$ has the solution $(U,V,H)=(n,1,m^2)$, and this implies that the point $P_1=(- n^2, m^2 n)$ is on the curve.

For the associated curve $y^2=x^3+ 4 (m^4+ n^4) x$ we have  the identity
\[
(2)(m+n)^4+ \frac{4(m^4+ n^4)}{2}=(2\,(m^2+ n^2+ m n))^2,
\]
so that for $d=2$  the homogenous space $(U)^4 d+ (4\,N/d) V^4=H^2$ has the solution $(U,V,H)=(m+n,1,2 (m^2+ n^2+ m\,n))$.

Consequently the point $Q_1=(2 (m + n)^2,4 (m + n) (m^2 + m\,n + n^2))$ is on the curve. We transfer $Q_1$ to the main curve,  and we have two points on that curve given by:
\begin{align*}
P_1(m,n) &= \{- n^2, m^2 n\},\\
P_2(m,n) &=\Bigl\{ \frac{(m^2 + m\,n + n^2)^2}{(m + n)^2}, \frac{m\,n(m^2 + m\,n + n^2) (2 m^2 + 3\,m\,n + 2\,n^2)}{(m + n)^3}\Bigr\}.
\end{align*}

In order to prove that the family \eqref{equ} has rank at least $2$ over $\Q(m,n)$, 
it suffices to find a specialization $(m,n)=(m_0, n_0)$ such that the points  $P_1(m_0, n_0),$ and  $P_2(m_0, n_0)$ are independent points on the specialized curve over 
$\Q$, due to the fact that the specialization is a homomorphism (see \cite{S}.)
For $(m_0,n_0)=(2,1)$ the curve is $y^2=x^3-17\,x$ and the points are 
\begin{align*}
P_1(2,1) &= \{-1,4\},\\
P_2(2,1) &=\Bigl\{ \frac{49}{9}, \frac{224}{27}\Bigr\}.
\end{align*}
The curve has rank $2$ over $\Q$ and the points are independent as can be checked with  the program \texttt{mwrank} \cite{mwrank}.
So  the rank of the family \eqref{equ} is at least $2$ over $\Q(m,n)$.

\subsection{Examples with bigger rank}
We have found, after a short search, many examples of curves with rank $7$ over $\Q$,  eight examples of curves with rank $8$ over $\Q$ and one example with rank $9$. The smaller values of $N$ that we found, in the case of rank $7$, are:
  \begin{align*}
3534242722&= 83^4+243^4\\
 3730925026&=125^4+243^4\\
 3732157186&= 155^4+237^4\\
3840351442&=147^4+241^4\\
  9633078002&= 77^4+313^4\\
 26939353666&= 77^4+405^4\\
 71486456242&= 81^4+517^4
    \end{align*} 
    The curves with rank  $8$  correspond to the following values of $N$:
    \begin{align*}
25792915457&= 326^4+347^4\\
141262310897&=88^4+613^4\\
436341291697&=631^4+726^4\\
9788096042497&=972^4+1727^4\\
106232596858561&=491^4+3210^4\\
159764080671457&=1191^4+3544^4\\
202891791817457&=1652^4+3739^4\\
380344532478577&=3513^4+3886^4
 \end{align*}
 Finally,  for $N=228746044559762=2387^4+3743^4$ the curve $y^2=x^3-N\,x$ has rank $9$.

\subsection{Euler parametrization} \label{euler}
The problem of  finding integers expressible in two different ways as sum of two fourth powers has been studied by several authors, see  the second volume  of the  History of Number Theory  (\cite{DI}, p. 644--648).
In particular, Euler constructed a bi-parametric family of solutions of $N=A^4+ B^4 =C^4+D^4$  given as follows:
\begin{align*}
A(u,w)&=u (u^6 + u^4 w^2 - 2\,u^2 w^4 + 3\,u\,w^5 + w^6),\\
B(u,w)&=w (u^6 - 3\,u^5 w - 2\,u^4 w^2 + u^2 w^4 + w^6),\\
C(u,w)&=u (u^6 + u^4 w^2 - 2\,u^2 w^4 - 3\,u\,w^5 + w^6),\\
D(u,w)&=w (u^6 + 3\,u^5 w - 2\,u^4 w^2 + u^2 w^4 + w^6).
\end{align*}
In what follows we  take $w=1$. There is no loss of generality with this choice due to the homogeneity. Then
\begin{equation}\label{eq_N}
\begin{aligned}
N&= A(u,1)^4+B(u,1)^4\\
&=C(u,1)^4+D(u,1)^4\\
&=(1+6\,u^2+u^4)(1-u^4 + u^8)(1-4\,u^2+8\,u^4-4\,u^6+u^8)\\
&\qquad\times (1+2 u^2+11\,u^4+2\,u^6+u^8).
\end{aligned}
\end{equation}

\subsection{Elliptic curves  and sums of biquadrates. Case of two equal sums}
Now we consider the  curves 
\begin{equation}\label{equ2}
y^2=x^3- N\,x
\end{equation}
where $N$ is given by~\eqref{eq_N}. Subsection~\ref{general} and  the fact that $N$ can be expressed in two different ways as sum of two biquadrates suggest that the  subfamily \eqref{equ2} has  a greater rank than the general family \eqref{equ}, and that this  generic rank has to be at least $4$. 
 
In \cite{IKN} the authors prove that if $N$ is square free, then the rank  of \eqref{equ2} over $\Q(u)$ is at least $3$. They also prove, assuming the Parity Conjecture and some other conditions, that the rank is at least $4$. We improve their result by proving in the next theorem that the rank is at least $4$ over $\Q(u)$ unconditionally.
 
\begin{theo}
The rank  of \eqref{equ2} over $\Q(u)$ is at least $4$.    
\end{theo}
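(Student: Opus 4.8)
The plan is to produce four explicit points on \eqref{equ2} by applying the construction of Subsection~\ref{general} to \emph{each} of the two Euler representations of $N$, and then to certify their independence by a single specialization, exactly as in the rank~$2$ case.

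First I would record the four points. Writing $A,B,C,D$ for the polynomials $A(u,1),\dots,D(u,1)$ of Subsection~\ref{euler}, the representation $N=A^4+B^4$ feeds into the formulas for $P_1(m,n)$ and $P_2(m,n)$ with $(m,n)=(A,B)$, and the second representation $N=C^4+D^4$ feeds the same formulas with $(m,n)=(C,D)$. This yields
\begin{align*}
\mathcal P_1 &= \bigl(-B^2,\,A^2 B\bigr),\\
\mathcal P_2 &= \Bigl(\tfrac{(A^2+AB+B^2)^2}{(A+B)^2},\,\tfrac{AB(A^2+AB+B^2)(2A^2+3AB+2B^2)}{(A+B)^3}\Bigr),\\
\mathcal P_3 &= \bigl(-D^2,\,C^2 D\bigr),\\
\mathcal P_4 &= \Bigl(\tfrac{(C^2+CD+D^2)^2}{(C+D)^2},\,\tfrac{CD(C^2+CD+D^2)(2C^2+3CD+2D^2)}{(C+D)^3}\Bigr).
\end{align*}
That each $\mathcal P_i$ lies on \eqref{equ2} needs no new argument: it is the content of Subsection~\ref{general} applied to the two representations, together with the homogeneity that lets us set $w=1$.

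Next I would pass to a specialization. Since the torsion of \eqref{equ2} is $\Z/2\Z$ and the specialization map $E(\Q(u))\to E_{u_0}(\Q)$ is a group homomorphism, it suffices to exhibit a single rational $u_0$ for which the four images $\mathcal P_i(u_0)$ are independent modulo torsion in $E_{u_0}(\Q)$; a relation over $\Q(u)$ would persist under the homomorphism, so independence of the images forces independence of $\mathcal P_1,\dots,\mathcal P_4$ and hence generic rank at least $4$. I would therefore pick a small integer $u_0$ at which the two representations are genuinely distinct (so $u_0\neq1$, since $u=1$ collapses them to $4^4+2^4$) and at which $A+B$ and $C+D$ do not vanish, form the resulting $N(u_0)$ and the four rational points, and invoke \texttt{mwrank} \cite{mwrank} to confirm that $E_{u_0}(\Q)$ has rank at least $4$ with $\mathcal P_1(u_0),\dots,\mathcal P_4(u_0)$ independent. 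Equivalently, and more robustly, it suffices to check that the $4\times4$ N\'eron--Tate height pairing matrix of these points is nonsingular, which certifies independence without computing the exact rank.

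The main obstacle is the independence itself. Because $A,B,C,D$ are tied by the single relation $A^4+B^4=C^4+D^4$, one cannot rule out a priori a dependence among the four points forced by this relation; in fact the descent by the $2$-isogeny through $(0,0)$ does not separate the two direct points, since both $\mathcal P_1$ and $\mathcal P_3$ have $x$-coordinate $\equiv-1$ in $\Q^*/(\Q^*)^2$ (while $\mathcal P_2$ and $\mathcal P_4$ have square $x$-coordinate and so map trivially). This is precisely what the specialization resolves: a single $u_0$ at which the four specialized points are independent excludes any generic relation. Locating such a $u_0$ is the one genuinely computational point, and it is exactly the ingredient that replaces the Parity Conjecture used in \cite{IKN}, turning their conditional bound into the unconditional statement that the rank of \eqref{equ2} over $\Q(u)$ is at least $4$.
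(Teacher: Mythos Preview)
Your strategy---feed both Euler representations $N=A^4+B^4=C^4+D^4$ through the rank-$2$ construction of Subsection~\ref{general} and then specialize---is natural, but it is \emph{not} the route the paper takes, and as written it leaves the decisive step open.

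The paper does not reuse the divisors $d=-1$ and $d=2$ twice.  Instead it exploits the polynomial \emph{factorization} of $N$ displayed in~\eqref{eq_N} to locate solutions of the homogeneous spaces for two \emph{new} divisors: $d=(1-u^4+u^8)(1+2u^2+11u^4+2u^6+u^8)$ on the main curve (with $(U,V)=(1,1)$), and $d=4(1+6u^2+u^4)(1+2u^2+11u^4+2u^6+u^8)$ on the associated curve (with $(U,V)=(u,1)$).  These, together with one point from $d=-1$ and one from $d=2$, give four points whose descent images lie in four genuinely different square classes; independence is then visible directly from $2$-descent, as the paper notes in its closing sentence.  Only the data $A,B,A+B$ enter the construction---the second representation $N=C^4+D^4$ is never used.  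The paper then confirms everything at $u_0=2$ (the curve $y^2=x^3-635318657\,x$, rank~$4$).

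Your four points, by contrast, all arise from $d\in\{-1,2\}$: as you yourself observe, $\alpha(\mathcal P_1)=\alpha(\mathcal P_3)=-1$ and the preimages of $\mathcal P_2,\mathcal P_4$ on the associated curve both sit in the class of~$2$.  So the descent images of $\mathcal P_1,\dots,\mathcal P_4$ span at most rank~$2$, and independence \emph{must} be settled by a height-pairing (or \texttt{mwrank}) computation at some specific $u_0$.  You propose this but do not carry it out, and success is not automatic: the four points are linked by the relation $A^4+B^4=C^4+D^4$, and the very paper~\cite{IKN} you aim to sharpen obtained only rank~$3$ unconditionally.  Until you exhibit a concrete $u_0$ and verify that the $4\times4$ regulator of $\mathcal P_1(u_0),\dots,\mathcal P_4(u_0)$ is nonzero, the proposal is a plan rather than a proof; the paper's use of the factorization of $N$ is exactly the extra idea that sidesteps this uncertainty.
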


\begin{proof}
 We proceed, as in the general case, searching  solutions in the homogenous spaces $d \,U^4-( N/d )V^4= H^2$ of the curve, where $d$ is a divisor of $N$.
 
There is one solution for $(U,V)=(1,1)$ and the divisor $d=(1-u^4+u^8) (1+2\,u^2+11\,u^4+2\,u^6+u^8)$, and  another one for 
 $(U,V)=(A(u),1)=(u(1+3\,u-2\,u^2+u^4+u^6),1)$ for the divisor $d=-1$. 
These two solutions  produce the corresponding  two points on the curve given by
\begin{align*}
P_1(u)&=\bigl\{(1-u^4+u^8)(1+2\,u^2+11\,u^4+2\,u^6+u^8),\\
&\qquad u^2 (-5+4\,u^2+u^4+u^6)(1-u^4+u^8)(1+2\,u^2+11\,u^4+2\,u^6+u^8)\bigr\},\\
P_2(u)&=\bigl\{-u^2 (1+3\,u-2\,u^2+u^4+u^6)^2,\\
&\qquad u(1+3\,u-2\,u^2+u^4+u^6)(1+u^2-2\,u^4-3\,u^5+u^6)^2\bigr\}.
\end{align*}
  
We perform  a similar search in the homogenous spaces of the associated curve $y^2=x^3+ 4\,N\,x$. In this case we observe the existence of solutions for $(U, V)=(A(u)+B(u),1)=(1+u+4\,u^2-2\,u^3-2\,u^4-2\,u^5+u^6+u^7,1)$ and $d=2$, and for $(U, V)=(u,1)$ and $d=4(1+6\,u^2+u^4)(1+2\,u^2+11\,u^4+2\,u^6+u^8)$.
  
Again we have the corresponding points on the associated curve given by
\begin{align*}
Q_1(u)&=\bigl\{2(1+u+4\,u^2-2\,u^3-2\,u^4-2\,u^5+u^6+u^7)^2,\\
&\qquad4(1+u+4\,u^2-2\,u^3-2\,u^4-2\,u^5+u^6+u^7)\\
&\qquad\times(1+u+6\,u^2+5\,u^3+5\,u^4-21\,u^5-5\,u^6-2\,u^7+13\,u^8 \\
&\qquad\qquad+15\,u^9-u^{10}-7\,u^{11}+u^{13}+u^{14})\bigr\}\\
Q_2(u)&=\bigl\{4\,u^2(1-u^4+u^8)(1-4\,u^2+8\,u^4-4\,u^6+u^8),\\
&\qquad\times 4\,u(1-u^4+u^8)(1-4\,u^2+8\,u^4-4\,u^6+u^8)\\
&\qquad\times(1+4\,u^2+6\,u^4+3\,u^6-4\,u^8+2\,u^{10})\bigr\}.
\end{align*}
We transfer  points $Q_1(u)$ and $Q_2(u)$ to the original curve and jointly with $P_1$ and $P_2$, we get four points  whose  $x$-coordinates are:
\begin{align*}
x_1(u)&=(1-u^4+u^8)(1+2\,u^2+11\,u^4+2\,u^6+u^8),\\
x_2(u)&=-u^2 (1+3\,u-2\,u^2+u^4+u^6)^2,\\
x_3(u)&=\frac{(1+4\,u^2+6\,u^4+3\,u^6-4\,u^8+2\,u^{10})^2}{4\,u^2},\\
x_4(u)&=(1+u+6\,u^2+5\,u^3+5\,u^4-21\,u^5-5\,u^6-2\,u^7+13\,u^8\\
&\qquad +15\,u^9-u^{10}-7\,u^{11}+u^{13}+u^{14})^2\\
&\qquad\qquad/(1+u+4\,u^2-2\,u^3-2\,u^4-2\,u^5+u^6+u^7)^2.
\end{align*}
  
We finish proving that the rank is at least $4$, arguing as before.  We choose $u=2$. The  curve is $y^2= x^3-635318657\,x$ and its rank is $4$. The points are
\begin{align*}
P_1&=\{137129, 49914956  \},\\
P_2&=\{-24964, 549998 \},\\
P_3&=\Bigl\{\frac{1766241}{16},\frac{ 2285325807}{64}\Bigr\},\\
P_4&=\Bigl\{\frac{365689129}{9801},\frac{ 5156125463944}{970299}\Bigr\}.
 \end{align*}
A calculation with \texttt{mwrank} \cite{mwrank} shows that the four points are independent. This implies, by an specialization argument, that the rank of the family over $\Q(u)$ is at least $4$, see \cite{S}.

An alternative proof follows  by considering directly the steps in the  $2$-descent argument for curves of the shape $y^2=x^3+A\,x^2+B\,x$, see \cite{ST}.
\end{proof}
 
\begin{rem}
Imposing conditions that force the root number to be equal to $-1$ it is possible to find a subfamily  with rank at least $5$ over $\Q(u)$ but in this  case the result is conditional to the validity of  the Parity Conjecture.
\end{rem}

\begin{rem}
The construction given above suggest that if we had families of integers $N$  with many  different representations as sum of two fourth powers, then it would be possible to  construct families of elliptic curves, and examples of elliptic curves, having large ranks. Unfortunately, not even a single example with three different representations is known, see \cite{G}. Moreover, a simple  heuristic  density argument  tell us that the possibility to find multiple representations of that  form it is very unlikely.
\end{rem}
 
\subsection{Examples with bigger rank}
In \cite{IKN} the authors quote several examples of rank $8$ curves within this family, but all  of thems reduce to one because the corresponding values of $N$ differ by a fourth power factor. It is  the first one in the following list. We have found another  value of $N$, the second one, for which the corresponding curve also has rank $8$ over $\Q$. 
\begin{alignat*}{2}
    155974778565937 &= 1623^4+3494^4  & &= 2338^4+3351^4 \\
2701104520630058561 &= 2513^4+40540^4 & &= 11888^4+40465^4
 \end{alignat*}  
 
Computations using this family are highly time-consuming because of the size of the coefficients.

\begin{rem}  Curves of the form $y^2=x^3+ B\,x$ have $j$-invariant equal to $1728$ and have been studied  by many authors, see for example \cite{SH}, \cite{F}, \cite{N} and  \cite{ACP} and the references given there. An example with rank $14$ was found by Watkins within the family  constructed in  \cite{ACP}.
\end{rem}

\begin{rem}
A more detailed version of this note will appear elsewhere.
\end{rem}

 \end{document}